\documentclass{amsart}
\usepackage{amsfonts}

\setcounter{MaxMatrixCols}{10}

\newtheorem{theorem}{Theorem}
\theoremstyle{plain}

\newtheorem{lemma}{Lemma}

\newtheorem{remark}{Remark}

\numberwithin{equation}{section}
\input{tcilatex}

\begin{document}
\title[H\"{o}lder integral inequality]{New Refinements for integral and sum
forms of H\"{o}lder inequality}
\author{\.{I}mdat \.{I}\c{s}can}
\address{Department of Mathematics, Faculty of Arts and Sciences,\\
Giresun University, 28200, Giresun, Turkey.}
\email{imdati@yahoo.com, imdat.iscan@giresun.edu.tr}
\subjclass[2000]{Primary 26D15; Secondary 26A51}
\keywords{H\"{o}lder Inequality, Young Inequality, Integral Inequalities,
Hermite-Hadamard Type Inequality}

\begin{abstract}
In this paper, new refinements for integral and sum forms of H\"{o}lder
inequality are established. We note that many existing inequalities related
to the H\"{o}lder inequality can be improved via obtained new inequalities
in here, we show this in an application
\end{abstract}

\maketitle

\section{Introduction}

\bigskip The famous Young inequality for two scalars is the $t$-weighted
arithmetic-geometric mean inequality. This inequality says that if $x,y>0$
and $t\in \lbrack 0,1]$, then%
\begin{equation}
x^{t}y^{1-t}\leq tx+(1-t)y  \label{0-1}
\end{equation}%
with equality if and only if $a=b.$ Let $p,q>1$ such that $1/p+1/q=1$. The
inequality (\ref{0-1}) can be written as%
\begin{equation}
xy\leq \frac{x^{p}}{p}+\frac{y^{q}}{q}  \label{0-2}
\end{equation}%
for any $x,y\geq 0$. In this form, the inequality (\ref{0-2}) was used to
prove the celebrated H\"{o}lder inequality. One of the most important
inequalities of analysis is H\"{o}lder's inequality. It contributes wide
area of pure and applied mathematics and plays a key role in resolving many
problems in social science and cultural science as well as in natural
science.

\begin{theorem}[H\"{o}lder Inequality for Integrals \protect\cite{MPF13}]
Let $p>1$ and $1/p+1/q=1$. If $f\ $and $g$ are real functions defined on $%
\left[ a,b\right] $ and if $\left\vert f\right\vert ^{p},\left\vert
g\right\vert ^{q}$ are integrable functions on $\left[ a,b\right] $ then%
\begin{equation}
\int_{a}^{b}\left\vert f(x)g(x)\right\vert dx\leq \left(
\int_{a}^{b}\left\vert f(x)\right\vert ^{p}dx\right) ^{1/p}\left(
\int_{a}^{b}\left\vert g(x)\right\vert ^{q}dx\right) ^{1/q},  \label{1-1}
\end{equation}%
with equality holding if and only if $A\left\vert f(x)\right\vert
^{p}=B\left\vert g(x)\right\vert ^{q}$ almost everywhere, where $A$ and $B$
are constants.
\end{theorem}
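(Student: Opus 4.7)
The plan is to derive the integral H\"{o}lder inequality directly from the scalar Young inequality (\ref{0-2}) by a normalization argument. First I would dispose of the degenerate cases: if $\int_{a}^{b}\left\vert f\right\vert ^{p}dx=0$, then $\left\vert f\right\vert =0$ almost everywhere on $[a,b]$, so $\left\vert fg\right\vert =0$ a.e.\ and both sides of (\ref{1-1}) vanish; the case $\int_{a}^{b}\left\vert g\right\vert ^{q}dx=0$ is symmetric. I may therefore assume that
\[
M:=\left( \int_{a}^{b}\left\vert f(x)\right\vert ^{p}dx\right) ^{1/p}\quad \text{and}\quad N:=\left( \int_{a}^{b}\left\vert g(x)\right\vert ^{q}dx\right) ^{1/q}
\]
are both finite and strictly positive.

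Next I would normalize by setting $F(x):=\left\vert f(x)\right\vert /M$ and $G(x):=\left\vert g(x)\right\vert /N$, so that $\int_{a}^{b}F^{p}dx=\int_{a}^{b}G^{q}dx=1$. Applying the pointwise Young inequality (\ref{0-2}) with $x\leftarrow F(t)$ and $y\leftarrow G(t)$ gives
\[
F(t)G(t)\leq \frac{F(t)^{p}}{p}+\frac{G(t)^{q}}{q}\qquad \text{for a.e.\ }t\in \lbrack a,b].
\]
The right-hand side is integrable (a sum of integrable functions), so $FG$ is integrable; integrating both sides over $[a,b]$ yields
\[
\int_{a}^{b}F(t)G(t)\,dt\leq \frac{1}{p}\int_{a}^{b}F^{p}dt+\frac{1}{q}\int_{a}^{b}G^{q}dt=\frac{1}{p}+\frac{1}{q}=1.
\]
Multiplying both sides by $MN$ recovers (\ref{1-1}).

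For the equality clause, I would trace back through the argument. Equality holds in the scalar inequality $xy\leq x^{p}/p+y^{q}/q$ precisely when $x^{p}=y^{q}$, so equality in the integrated version forces $F(t)^{p}=G(t)^{q}$ for almost every $t\in \lbrack a,b]$. Unwinding the normalization, this is $N^{q}\left\vert f(t)\right\vert ^{p}=M^{p}\left\vert g(t)\right\vert ^{q}$ a.e., which is of the required form $A\left\vert f\right\vert ^{p}=B\left\vert g\right\vert ^{q}$ with $A=N^{q}$ and $B=M^{p}$; conversely, a relation of this shape (not both constants zero) makes the Young step an equality pointwise and hence after integration as well.

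I do not expect a serious obstacle here; the only delicate point is the measurability/integrability bookkeeping needed to justify integrating the pointwise Young bound, which is handled by the hypothesis that $\left\vert f\right\vert ^{p}$ and $\left\vert g\right\vert ^{q}$ are integrable. All of the analytic content is concentrated in the scalar Young inequality (\ref{0-2}), which has already been recorded above.
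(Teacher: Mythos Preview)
Your argument is the standard normalization-plus-Young proof and is correct, including the equality analysis. Note, however, that the paper does not supply its own proof of this theorem: it is quoted as a classical result with a citation to \cite{MPF13}, so there is nothing to compare against beyond observing that your derivation is the conventional one and is consistent with the paper's earlier remark that inequality (\ref{0-2}) ``was used to prove the celebrated H\"{o}lder inequality.''
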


\begin{theorem}[H\"{o}lder Inequality for Sums \protect\cite{MPF13}]
Let $a=\left( a_{1},...,a_{n}\right) $ and $b=\left( b_{1},...,b_{n}\right) $
be two positive n-tuples and $p,q>0$ such that $1/p+1/q=1.$ Then we have%
\begin{equation}
\sum_{k=1}^{n}a_{k}b_{k}\leq \left( \sum_{k=1}^{n}a_{k}^{p}\right)
^{1/p}\left( \sum_{k=1}^{n}b_{k}^{q}\right) ^{1/q}.  \label{1-2}
\end{equation}%
Equality hold in (\ref{1-2}) if and only if $a^{p}$ and $b^{q}$ are
proportional.
\end{theorem}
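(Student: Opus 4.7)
The plan is to mirror the classical derivation of H\"{o}lder's inequality from Young's inequality (\ref{0-2}), which is already available in the excerpt. I would first dispose of the degenerate cases, then normalize both tuples, apply Young pointwise, sum, and finally unwind the normalization.

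Concretely, set $A = \left(\sum_{k=1}^{n} a_{k}^{p}\right)^{1/p}$ and $B = \left(\sum_{k=1}^{n} b_{k}^{q}\right)^{1/q}$. If $A=0$ or $B=0$, then all $a_{k}$ or all $b_{k}$ vanish and (\ref{1-2}) is trivial. Assuming $A,B>0$, define
\begin{equation*}
x_{k} = \frac{a_{k}}{A}, \qquad y_{k} = \frac{b_{k}}{B}, \qquad k=1,\dots,n,
\end{equation*}
so that $\sum_{k=1}^{n} x_{k}^{p} = 1$ and $\sum_{k=1}^{n} y_{k}^{q} = 1$.

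Next I apply Young's inequality (\ref{0-2}) to each pair $(x_{k},y_{k})$ of nonnegative reals to obtain
\begin{equation*}
x_{k}y_{k} \leq \frac{x_{k}^{p}}{p} + \frac{y_{k}^{q}}{q}.
\end{equation*}
Summing over $k$ and using the normalizations yields $\sum_{k=1}^{n} x_{k}y_{k} \leq \frac{1}{p} + \frac{1}{q} = 1$. Multiplying through by $AB$ gives exactly (\ref{1-2}).

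For the equality characterization I would invoke the equality condition of Young's inequality: equality in $x_{k}y_{k} \leq x_{k}^{p}/p + y_{k}^{q}/q$ requires $x_{k}^{p} = y_{k}^{q}$ for each $k$, which unwound means $a_{k}^{p}/A^{p} = b_{k}^{q}/B^{q}$, i.e.\ $a^{p}$ and $b^{q}$ are proportional. I do not expect a real obstacle here: the main subtlety is simply being careful about the trivial case where one of the tuples is identically zero, and stating the equality condition precisely. Since Young's inequality has already been recorded in the introduction, this argument is essentially self-contained within the paper.
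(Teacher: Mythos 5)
Your argument is correct, and in fact it supplies something the paper itself does not: the paper states this theorem as a known classical result with a citation to Mitrinovi\'{c}--Pe\v{c}ari\'{c}--Fink and gives no proof at all, so there is no ``paper's approach'' to compare against. Your normalization route --- set $A=\left(\sum_{k}a_{k}^{p}\right)^{1/p}$, $B=\left(\sum_{k}b_{k}^{q}\right)^{1/q}$, apply Young's inequality (\ref{0-2}) to $x_{k}=a_{k}/A$, $y_{k}=b_{k}/B$, and sum --- is the standard derivation and meshes well with the introduction, which records (\ref{0-2}) precisely for this purpose. Two small points of care: since the hypothesis already makes the tuples positive, the degenerate case $A=0$ or $B=0$ you set aside is actually vacuous here; and for the equality characterization you have only argued the forward direction (equality forces $x_{k}^{p}=y_{k}^{q}$ for every $k$ via the strict case of Young, hence $a_{k}^{p}/A^{p}=b_{k}^{q}/B^{q}$), so you should add the one-line converse check that proportionality $a_{k}^{p}=\lambda b_{k}^{q}$ yields $\sum_{k}a_{k}b_{k}=\lambda^{1/p}\sum_{k}b_{k}^{q}=AB$, using $q/p+1=q$. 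With those remarks the proof is complete and self-contained.
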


Of course the H\"{o}lder's inequality has been extensively explored and
tested to a new situation by a number of scientists. Many generalizations
and refinements for H\"{o}lder's inequality have been obtained so far. See,
for example, \cite{BB61,DE68,HLP52,KB16,KY12,MP90,MPF13,QH11,W07,Y03} and
the references therein. In this paper, by using a simple proof method some
new refinements for integral and sum forms of H\"{o}lder's inequality are
obtained.

\section{Main Results}

\begin{theorem}
\label{2.2} Let $p>1$ and $1/p+1/q=1$. If $f\ $and $g$ are real functions
defined on $\left[ a,b\right] $ and if $\left\vert f\right\vert
^{p},\left\vert g\right\vert ^{q}$ are integrable functions on $\left[ a,b%
\right] $, then

\begin{itemize}
\item[i.)] 
\begin{eqnarray}
&&\int_{a}^{b}\left\vert f(x)g(x)\right\vert dx  \label{2-1} \\
&\leq &\frac{1}{b-a}\left\{ \left( \int_{a}^{b}(b-x)\left\vert
f(x)\right\vert ^{p}dx\right) ^{1/p}\left( \int_{a}^{b}(b-x)\left\vert
g(x)\right\vert ^{q}dx\right) ^{1/q}\right.  \notag \\
&&\left. +\left( \int_{a}^{b}(x-a)\left\vert f(x)\right\vert ^{p}dx\right)
^{1/p}\left( \int_{a}^{b}(x-a)\left\vert g(x)\right\vert ^{q}dx\right)
^{1/q}\right\}  \notag
\end{eqnarray}

\item[ii.)] 
\begin{eqnarray}
&&\frac{1}{b-a}\left\{ \left( \int_{a}^{b}(b-x)\left\vert f(x)\right\vert
^{p}dx\right) ^{1/p}\left( \int_{a}^{b}(b-x)\left\vert g(x)\right\vert
^{q}dx\right) ^{1/q}\right.  \notag \\
&&\left. +\left( \int_{a}^{b}(x-a)\left\vert f(x)\right\vert ^{p}dx\right)
^{1/p}\left( \int_{a}^{b}(x-a)\left\vert g(x)\right\vert ^{q}dx\right)
^{1/q}\right\}  \notag \\
&\leq &\left( \int_{a}^{b}\left\vert f(x)\right\vert ^{p}dx\right)
^{1/p}\left( \int_{a}^{b}\left\vert g(x)\right\vert ^{q}dx\right) ^{1/q}.
\label{2-2}
\end{eqnarray}
\end{itemize}
\end{theorem}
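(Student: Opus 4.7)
For part (i), the plan is to exploit the fact that on $[a,b]$ the two weights $(b-x)/(b-a)$ and $(x-a)/(b-a)$ form a partition of unity. Concretely, I would write
\begin{equation*}
|f(x)g(x)| = \frac{b-x}{b-a}|f(x)g(x)| + \frac{x-a}{b-a}|f(x)g(x)|,
\end{equation*}
integrate over $[a,b]$, and then on each of the two resulting integrals split the weight as $(b-x) = (b-x)^{1/p}(b-x)^{1/q}$ (and likewise for $(x-a)$), absorbing one factor into $|f|$ and the other into $|g|$. Applying the classical Hölder inequality (\ref{1-1}) to each of the two pieces then yields the bound on the right-hand side of (\ref{2-1}). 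This is the standard ``weighted-Hölder on each half'' trick, and the only thing to check is that the exponents on $(b-x)$ and $(x-a)$ match up correctly.

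For part (ii), the plan is to reduce the claim to an instance of the two-term discrete Hölder inequality (\ref{1-2}). Set
\begin{equation*}
A_1 = \int_a^b (b-x)|f(x)|^p\,dx,\quad A_2 = \int_a^b (x-a)|f(x)|^p\,dx,
\end{equation*}
and analogously $B_1,B_2$ with $|g|^q$. The additivity $A_1+A_2 = (b-a)\int_a^b|f|^p\,dx$ and $B_1+B_2 = (b-a)\int_a^b|g|^q\,dx$ is immediate because $(b-x)+(x-a)=b-a$. Applying (\ref{1-2}) with $n=2$, $a_k = A_k^{1/p}$, $b_k = B_k^{1/q}$ gives
\begin{equation*}
A_1^{1/p}B_1^{1/q} + A_2^{1/p}B_2^{1/q} \leq (A_1+A_2)^{1/p}(B_1+B_2)^{1/q},
\end{equation*}
and substituting the expressions for $A_1+A_2$ and $B_1+B_2$ the right-hand side becomes $(b-a)\bigl(\int_a^b|f|^p\bigr)^{1/p}\bigl(\int_a^b|g|^q\bigr)^{1/q}$. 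Dividing through by $b-a$ produces exactly (\ref{2-2}).

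Neither part involves a serious obstacle; the crux in both is recognizing the right decomposition. If I had to name the trickier step, it would be spotting in (ii) that the usual Hölder inequality is itself the tool that controls a sum of two Hölder-type terms by one combined Hölder term, i.e.\ that one should feed $A_k^{1/p}$ and $B_k^{1/q}$ into the two-term version of (\ref{1-2}). Once that is in place, both (i) and (ii) fall out by direct computation.
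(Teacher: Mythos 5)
Your part (i) is exactly the paper's own ``short method'': write $|fg|=\frac{b-x}{b-a}|fg|+\frac{x-a}{b-a}|fg|$, split each weight as $(b-x)=(b-x)^{1/p}(b-x)^{1/q}$, and apply (\ref{1-1}) to each piece; nothing to add there. Your part (ii), however, takes a genuinely different and cleaner route. The paper proves (\ref{2-2}) by first disposing of the degenerate case $I=\bigl(\int_a^b|f|^p\bigr)^{1/p}\bigl(\int_a^b|g|^q\bigr)^{1/q}=0$, then dividing the left-hand side by $I$, and applying the Young inequality (\ref{0-1}) in the form $u^{1/p}v^{1/q}\leq u/p+v/q$ to each normalized ratio, summing to $1/p+1/q=1$. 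You instead feed $A_k^{1/p}$ and $B_k^{1/q}$ into the two-term discrete H\"{o}lder inequality (\ref{1-2}), which gives $A_1^{1/p}B_1^{1/q}+A_2^{1/p}B_2^{1/q}\leq (A_1+A_2)^{1/p}(B_1+B_2)^{1/q}$ in one stroke, and the additivity $(b-x)+(x-a)=b-a$ finishes the computation. This is correct and avoids the case distinction and the normalization entirely; it also makes transparent why the refinement is an interpolation between (\ref{1-1}) applied twice and (\ref{1-1}) applied once. The only pedantic point is that (\ref{1-2}) is stated in the paper for strictly positive tuples, whereas $A_2$ or $B_1$ could vanish; this is handled by a trivial limiting argument or by noting the two-term inequality holds for nonnegative entries. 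Of course, the two-term H\"{o}lder inequality is itself usually proved by exactly the normalize-and-apply-Young argument the paper uses, so your proof can be read as packaging the paper's computation into a named lemma.
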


\begin{proof}
i.)\emph{First method for Proof (Short method):} By using of H\"{o}lder
inequality in (\ref{1-1}), it is easily seen that 
\begin{eqnarray*}
&&\int_{a}^{b}\left\vert f(x)g(x)\right\vert dx \\
&=&\frac{1}{b-a}\left\{ \int_{a}^{b}\left\vert
(b-x)^{1/p}f(x)(b-x)^{1/q}g(x)\right\vert dx\right. \\
&&\left. +\int_{a}^{b}\left\vert (x-a)^{1/p}f(x)(x-a)^{1/q}g(x)\right\vert
dx\right\} \\
&\leq &\frac{1}{b-a}\left\{ \left( \int_{a}^{b}(b-x)\left\vert
f(x)\right\vert ^{p}dx\right) ^{1/p}\left( \int_{a}^{b}(b-x)\left\vert
g(x)\right\vert ^{q}dx\right) ^{1/q}\right. \\
&&\left. +\left( \int_{a}^{b}(x-a)\left\vert f(x)\right\vert ^{p}dx\right)
^{1/p}\left( \int_{a}^{b}(x-a)\left\vert g(x)\right\vert ^{q}dx\right)
^{1/q}\right\} .
\end{eqnarray*}
\emph{Second method for Proof (Long method):} Applying (\ref{1-1}) on the
subinterval $[a,\lambda b+(1-\lambda )a]$ and on the subinterval $[\lambda
b+(1-\lambda )a,b]$, respectively, we get%
\begin{equation*}
\int_{a}^{\lambda b+(1-\lambda )a}\left\vert f(x)g(x)\right\vert dx\leq
\left( \int_{a}^{\lambda b+(1-\lambda )a}\left\vert f(x)\right\vert
^{p}dx\right) ^{1/p}\left( \int_{a}^{\lambda b+(1-\lambda )a}\left\vert
g(x)\right\vert ^{q}dx\right) ^{1/q}
\end{equation*}%
and%
\begin{equation*}
\int_{\lambda b+(1-\lambda )a}^{b}\left\vert f(x)g(x)\right\vert dx\leq
\left( \int_{\lambda b+(1-\lambda )a}^{b}\left\vert f(x)\right\vert
^{p}dx\right) ^{1/p}\left( \int_{\lambda b+(1-\lambda )a}^{b}\left\vert
g(x)\right\vert ^{q}dx\right) ^{1/q}.
\end{equation*}%
Adding the resulting inequalities, we get:%
\begin{eqnarray}
\int_{a}^{b}\left\vert f(x)g(x)\right\vert dx &\leq &\left(
\int_{a}^{\lambda b+(1-\lambda )a}\left\vert f(x)\right\vert ^{p}dx\right)
^{1/p}\left( \int_{a}^{\lambda b+(1-\lambda )a}\left\vert g(x)\right\vert
^{q}dx\right) ^{1/q}  \notag \\
&&+\left( \int_{\lambda b+(1-\lambda )a}^{b}\left\vert f(x)\right\vert
^{p}dx\right) ^{1/p}\left( \int_{\lambda b+(1-\lambda )a}^{b}\left\vert
g(x)\right\vert ^{q}dx\right) ^{1/q}.  \label{2-1a}
\end{eqnarray}%
By the change of variable $x=ub+(1-u)a$; on the right hand sides integrals
in (\ref{2-1a}), we have%
\begin{eqnarray*}
&&\int_{a}^{b}\left\vert f(x)g(x)\right\vert dx \\
&\leq &\left( b-a\right) \left\{ \left( \int_{0}^{\lambda }\left\vert
f(ub+(1-u)a)\right\vert ^{p}du\right) ^{1/p}\left( \int_{0}^{\lambda
}\left\vert g(ub+(1-u)a)\right\vert ^{q}du\right) ^{1/q}\right. \\
&&\left. +\left( \int_{\lambda }^{1}\left\vert f(ub+(1-u)a)\right\vert
^{p}du\right) ^{1/p}\left( \int_{\lambda }^{1}\left\vert
g(ub+(1-u)a)\right\vert ^{q}du\right) ^{1/q}\right\} .
\end{eqnarray*}%
Integrating both sides of this inequality over $\left[ 0,1\right] $ with
respect to $\lambda $ we obtain that 
\begin{eqnarray*}
&&\int_{a}^{b}\left\vert f(x)g(x)\right\vert dx \\
&\leq &\left( b-a\right) \left\{ \int_{0}^{1}\left( \int_{0}^{\lambda
}\left\vert f(ub+(1-u)a)\right\vert ^{p}du\right) ^{1/p}\left(
\int_{0}^{\lambda }\left\vert g(ub+(1-u)a)\right\vert ^{q}du\right)
^{1/q}d\lambda \right. \\
&&\left. +\int_{0}^{1}\left( \int_{\lambda }^{1}\left\vert
f(ub+(1-u)a)\right\vert ^{p}du\right) ^{1/p}\left( \int_{\lambda
}^{1}\left\vert g(ub+(1-u)a)\right\vert ^{q}du\right) ^{1/q}d\lambda
\right\} .
\end{eqnarray*}%
Then, By applying of H\"{o}lder inequality for the right hand sides
integrals in the last inequality, we have%
\begin{eqnarray*}
&&\int_{a}^{b}\left\vert f(x)g(x)\right\vert dx \\
&\leq &\left( b-a\right) \left\{ \left( \int_{0}^{1}\int_{0}^{\lambda
}\left\vert f(ub+(1-u)a)\right\vert ^{p}dud\lambda \right) ^{1/p}\left(
\int_{0}^{1}\int_{0}^{\lambda }\left\vert g(ub+(1-u)a)\right\vert
^{q}dud\lambda \right) ^{1/q}\right. \\
&&\left. +\left( \int_{0}^{1}\int_{\lambda }^{1}\left\vert
f(ub+(1-u)a)\right\vert ^{p}dud\lambda \right) ^{1/p}\left(
\int_{0}^{1}\int_{\lambda }^{1}\left\vert g(ub+(1-u)a)\right\vert
^{q}dud\lambda \right) ^{1/q}\right\} .
\end{eqnarray*}%
By Fubini theorem and the change of variable $u=(x-a)/(b-a)$ we get%
\begin{eqnarray*}
&&\int_{a}^{b}\left\vert f(x)g(x)\right\vert dx \\
&\leq &\left( b-a\right) \left\{ \left( \int_{0}^{1}\int_{u}^{1}\left\vert
f(ub+(1-u)a)\right\vert ^{p}d\lambda du\right) ^{1/p}\left(
\int_{0}^{1}\int_{u}^{1}\left\vert g(ub+(1-u)a)\right\vert ^{q}d\lambda
du\right) ^{1/q}\right. \\
&&\left. +\left( \int_{0}^{1}\int_{0}^{u}\left\vert f(ub+(1-u)a)\right\vert
^{p}d\lambda du\right) ^{1/p}\left( \int_{0}^{1}\int_{0}^{u}\left\vert
g(ub+(1-u)a)\right\vert ^{q}d\lambda du\right) ^{1/q}\right\}
\end{eqnarray*}%
\begin{eqnarray*}
&=&\left( b-a\right) \left\{ \left( \int_{0}^{1}\left( 1-u\right) \left\vert
f(ub+(1-u)a)\right\vert ^{p}du\right) ^{1/p}\left( \int_{0}^{1}\left(
1-u\right) \left\vert g(ub+(1-u)a)\right\vert ^{q}du\right) ^{1/q}\right. \\
&&\left. +\left( \int_{0}^{1}u\left\vert f(ub+(1-u)a)\right\vert
^{p}du\right) ^{1/p}\left( \int_{0}^{1}u\left\vert g(ub+(1-u)a)\right\vert
^{q}du\right) ^{1/q}\right\} \\
&=&\frac{1}{b-a}\left\{ \left( \int_{a}^{b}(b-x)\left\vert f(x)\right\vert
^{p}dx\right) ^{1/p}\left( \int_{a}^{b}(b-x)\left\vert g(x)\right\vert
^{q}dx\right) ^{1/q}\right. \\
&&\left. +\left( \int_{a}^{b}(x-a)\left\vert f(x)\right\vert ^{p}dx\right)
^{1/p}\left( \int_{a}^{b}(x-a)\left\vert g(x)\right\vert ^{q}dx\right)
^{1/q}\right\} .
\end{eqnarray*}

b) First let us consider the case%
\begin{equation*}
\left( \int_{a}^{b}\left\vert f(x)\right\vert ^{p}dx\right) ^{1/p}\left(
\int_{a}^{b}\left\vert g(x)\right\vert ^{q}dx\right) ^{1/q}=0.
\end{equation*}

Then, $f(x)=0$ for almost everywhere $x\in \left[ a,b\right] $ or $g(x)=0$
for almost every where $x\in \left[ a,b\right] .$ Thus, we have 
\begin{equation*}
\int_{a}^{b}\left\vert f(x)g(x)\right\vert dx=0.
\end{equation*}%
Therefore the inequality (\ref{2-2}) is trivial in this case.

Finally, we consider the case%
\begin{equation*}
I=\left( \int_{a}^{b}\left\vert f(x)\right\vert ^{p}dx\right) ^{1/p}\left(
\int_{a}^{b}\left\vert g(x)\right\vert ^{q}dx\right) ^{1/q}\neq 0.
\end{equation*}%
Then%
\begin{eqnarray*}
&&\frac{1}{\left( b-a\right) I}\left\{ \left( \int_{a}^{b}(b-x)\left\vert
f(x)\right\vert ^{p}dx\right) ^{1/p}\left( \int_{a}^{b}(b-x)\left\vert
g(x)\right\vert ^{q}dx\right) ^{1/q}\right. \\
&&\left. +\left( \int_{a}^{b}(x-a)\left\vert f(x)\right\vert ^{p}dx\right)
^{1/p}\left( \int_{a}^{b}(x-a)\left\vert g(x)\right\vert ^{q}dx\right)
^{1/q}\right\} \\
&\leq &\frac{1}{b-a}\left\{ \left( \frac{\int_{a}^{b}(b-x)\left\vert
f(x)\right\vert ^{p}dx}{\int_{a}^{b}\left\vert f(x)\right\vert ^{p}dx}%
\right) ^{1/p}\left( \frac{\int_{a}^{b}(b-x)\left\vert g(x)\right\vert ^{q}dx%
}{\int_{a}^{b}\left\vert g(x)\right\vert ^{q}dx}\right) ^{1/q}\right. \\
&&\left. +\left( \frac{\int_{a}^{b}(x-a)\left\vert f(x)\right\vert ^{p}dx}{%
\int_{a}^{b}\left\vert f(x)\right\vert ^{p}dx}\right) ^{1/p}\left( \frac{%
\int_{a}^{b}(x-a)\left\vert g(x)\right\vert ^{q}dx}{\int_{a}^{b}\left\vert
g(x)\right\vert ^{q}dx}\right) ^{1/q}\right\} .
\end{eqnarray*}%
Applying (\ref{0-1}) on the right hand sides integrals of the last inequality%
\begin{eqnarray*}
&&\frac{1}{\left( b-a\right) I}\left\{ \left( \int_{a}^{b}(b-x)\left\vert
f(x)\right\vert ^{p}dx\right) ^{1/p}\left( \int_{a}^{b}(b-x)\left\vert
g(x)\right\vert ^{q}dx\right) ^{1/q}\right. \\
&&\left. +\left( \int_{a}^{b}(x-a)\left\vert f(x)\right\vert ^{p}dx\right)
^{1/p}\left( \int_{a}^{b}(x-a)\left\vert g(x)\right\vert ^{q}dx\right)
^{1/q}\right\} \\
&\leq &\frac{1}{b-a}\left\{ \frac{\int_{a}^{b}(b-x)\left\vert
f(x)\right\vert ^{p}dx}{p\int_{a}^{b}\left\vert f(x)\right\vert ^{p}dx}+%
\frac{\int_{a}^{b}(b-x)\left\vert g(x)\right\vert ^{q}dx}{%
q\int_{a}^{b}\left\vert g(x)\right\vert ^{q}dx}\right. \\
&&\left. +\frac{\int_{a}^{b}(x-a)\left\vert f(x)\right\vert ^{p}dx}{%
p\int_{a}^{b}\left\vert f(x)\right\vert ^{p}dx}+\frac{\int_{a}^{b}(x-a)\left%
\vert g(x)\right\vert ^{q}dx}{q\int_{a}^{b}\left\vert g(x)\right\vert ^{q}dx}%
\right\} \\
&=&\frac{1}{p}+\frac{1}{q}=1.
\end{eqnarray*}%
This completes the proof.
\end{proof}

\begin{remark}
The inequality (\ref{2-2}) show that the inequality (\ref{2-1}) is better
than the inequality (\ref{1-1}).
\end{remark}

The more general versions of Theorem \ref{2.2} can be given as follow:

\begin{theorem}
\label{T-2.1} Let $p>1$ and $1/p+1/q=1$. If $f\ $and $g$ are real functions
defined on $\left[ a,b\right] $ and if $\left\vert f\right\vert
^{p},\left\vert g\right\vert ^{q}$ are integrable functions on $\left[ a,b%
\right] $, then

i.)%
\begin{eqnarray}
&&\int_{a}^{b}\left\vert f(x)g(x)\right\vert dx  \label{2-1-1} \\
&\leq &\left\{ \left( \int_{a}^{b}\alpha (x)\left\vert f(x)\right\vert
^{p}dx\right) ^{1/p}\left( \int_{a}^{b}\alpha (x)\left\vert g(x)\right\vert
^{q}dx\right) ^{1/q}\right.  \notag \\
&&\left. +\left( \int_{a}^{b}\beta (x)\left\vert f(x)\right\vert
^{p}dx\right) ^{1/p}\left( \int_{a}^{b}\beta (x)\left\vert g(x)\right\vert
^{q}dx\right) ^{1/q}\right\} ,  \notag
\end{eqnarray}%
where $\alpha ,\beta :\left[ a,b\right] \rightarrow \left[ 0,\infty \right) $
are continuous functions such that $\alpha (x)+\beta (x)=1,\ x\in \left[ a,b%
\right] .$

ii.) 
\begin{eqnarray*}
&&\int_{a}^{b}\left\vert f(x)g(x)\right\vert dx \\
&\leq &\sum_{i=1}^{n}\left( \int_{a}^{b}\alpha _{i}(x)\left\vert
f(x)\right\vert ^{p}dx\right) ^{1/p}\left( \int_{a}^{b}\alpha
_{i}(x)\left\vert g(x)\right\vert ^{q}dx\right) ^{1/q}
\end{eqnarray*}%
where $\alpha _{i}:\left[ a,b\right] \rightarrow \left[ 0,\infty \right)
,i=1,2,...n,$ are continuous functions such that $\sum_{i=1}^{n}\alpha
_{i}(x)=1,\ x\in \left[ a,b\right] .$
\end{theorem}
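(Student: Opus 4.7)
The plan is to mimic the \emph{short method} used for Theorem \ref{2.2}(i), simply replacing the specific linear partition of unity $\{(b-x)/(b-a),\,(x-a)/(b-a)\}$ on $[a,b]$ by the arbitrary continuous partition of unity provided by the hypothesis. No iterated change of variables or Fubini argument is needed, because the weights are already prescribed rather than being produced by an averaging procedure.

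For part (i), I would begin from the identity
\[
|f(x)g(x)| = \alpha(x)|f(x)g(x)| + \beta(x)|f(x)g(x)|,
\]
which holds pointwise because $\alpha(x)+\beta(x)=1$ on $[a,b]$. Since $\alpha,\beta\ge 0$, the factors $\alpha^{1/p},\alpha^{1/q},\beta^{1/p},\beta^{1/q}$ are well defined continuous functions on $[a,b]$, and one may split
\[
\alpha(x)|f(x)g(x)| = \bigl|\alpha(x)^{1/p}f(x)\bigr|\cdot\bigl|\alpha(x)^{1/q}g(x)\bigr|,
\]
and similarly for $\beta$. Integrating over $[a,b]$ and applying the classical Hölder inequality (\ref{1-1}) to each of the two resulting integrals yields
\[
\int_a^b \alpha(x)|f(x)g(x)|\,dx \le \left(\int_a^b \alpha(x)|f(x)|^p\,dx\right)^{1/p}\left(\int_a^b \alpha(x)|g(x)|^q\,dx\right)^{1/q},
\]
together with the analogous estimate in $\beta$. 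Summing the two bounds produces (\ref{2-1-1}) directly. The recovery of Theorem \ref{2.2}(i) by choosing $\alpha(x)=(b-x)/(b-a)$ and $\beta(x)=(x-a)/(b-a)$ serves as an immediate sanity check.

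For part (ii), the argument is identical with the partition of unity $\{\alpha_i\}_{i=1}^n$ in place of $\{\alpha,\beta\}$: write
\[
|f(x)g(x)| = \sum_{i=1}^{n}\alpha_i(x)|f(x)g(x)| = \sum_{i=1}^{n}\bigl|\alpha_i(x)^{1/p}f(x)\bigr|\cdot\bigl|\alpha_i(x)^{1/q}g(x)\bigr|,
\]
integrate over $[a,b]$, and apply (\ref{1-1}) to each of the $n$ summands. The only subtlety worth flagging, and therefore the only point where care is needed, is verifying integrability of the new factors: the continuity of $\alpha_i$ on the compact interval $[a,b]$ gives $0\le\alpha_i(x)\le 1$, so $\alpha_i(x)|f(x)|^p\le|f(x)|^p$ and $\alpha_i(x)|g(x)|^q\le|g(x)|^q$, and the hypothesised integrability of $|f|^p$ and $|g|^q$ guarantees the finiteness of every factor appearing on the right-hand side. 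Beyond this routine measurability/integrability check there is no real obstacle; the content of the theorem is essentially the observation that Hölder's inequality is linear in any non-negative weight that sums to one.
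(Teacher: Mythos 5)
Your proof is correct and is exactly the argument the paper intends: the paper's "proof" of Theorem \ref{T-2.1} simply defers to the method of Theorem \ref{2.2}, whose short proof writes $1=\frac{b-x}{b-a}+\frac{x-a}{b-a}$, splits the integrand accordingly, and applies H\"{o}lder to each piece --- precisely your partition-of-unity argument with the specific weights replaced by general ones. Your added remark on integrability of the weighted factors is a harmless (and welcome) extra check.
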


\begin{proof}
The proof of Theorem is easily seen by using similar method the proof of
Theorem \ref{2.2}.
\end{proof}

\begin{remark}
It is easily seen that the inequalities obtained in Theorem \ref{T-2.1} are
the best than the inequality (\ref{1-1}).
\end{remark}

\begin{remark}
i.) In the inequality (\ref{2-1-1}) of Theorem \ref{T-2.1}, if we take $%
\alpha (x)=\sin ^{2}x$ and $\beta (x)=\cos ^{2}x$, then we have 
\begin{eqnarray*}
&&\int_{a}^{b}\left\vert f(x)g(x)\right\vert dx \\
&\leq &\left\{ \left( \int_{a}^{b}\sin ^{2}x\left\vert f(x)\right\vert
^{p}dx\right) ^{1/p}\left( \int_{a}^{b}\sin ^{2}x\left\vert g(x)\right\vert
^{q}dx\right) ^{1/q}\right. \\
&&\left. +\left( \int_{a}^{b}\cos ^{2}x\left\vert f(x)\right\vert
^{p}dx\right) ^{1/p}\left( \int_{a}^{b}\cos ^{2}x\left\vert g(x)\right\vert
^{q}dx\right) ^{1/q}\right\} .
\end{eqnarray*}

ii.) In the inequality (\ref{2-1-1}) of Theorem \ref{T-2.1}, if we take $%
\alpha (x)=\frac{b-x}{b-a}$ and $\beta (x)=\frac{x-a}{b-a}$, then we have
the inequality (\ref{2-1}).
\end{remark}

\begin{theorem}
\label{2.3}Let $a=\left( a_{1},...,a_{n}\right) $ and $b=\left(
b_{1},...,b_{n}\right) $ be two positive n-tuples and $p,q>0$ such that $%
1/p+1/q=1.$ Then

\begin{itemize}
\item[i.)] 
\begin{eqnarray}
\sum_{k=1}^{n}a_{k}b_{k} &\leq &\frac{1}{n}\left\{ \left(
\sum_{k=1}^{n}ka_{k}^{p}\right) ^{1/p}\left( \sum_{k=1}^{n}kb_{k}^{q}\right)
^{1/q}\right.  \label{2-3} \\
&&\left. +\left( \sum_{k=1}^{n}\left( n-k\right) a_{k}^{p}\right)
^{1/p}\left( \sum_{k=1}^{n}\left( n-k\right) b_{k}^{q}\right) ^{1/q}\right\}
.  \notag
\end{eqnarray}

\item[ii.)] 
\begin{eqnarray}
&&\frac{1}{n}\left\{ \left( \sum_{k=1}^{n}ka_{k}^{p}\right) ^{1/p}\left(
\sum_{k=1}^{n}kb_{k}^{q}\right) ^{1/q}+\left( \sum_{k=1}^{n}\left(
n-k\right) a_{k}^{p}\right) ^{1/p}\left( \sum_{k=1}^{n}\left( n-k\right)
b_{k}^{q}\right) ^{1/q}\right\}  \label{2-4} \\
&\leq &\left( \sum_{k=1}^{n}a_{k}^{p}\right) ^{1/p}\left(
\sum_{k=1}^{n}b_{k}^{q}\right) ^{1/q}.  \notag
\end{eqnarray}
\end{itemize}
\end{theorem}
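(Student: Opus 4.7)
The plan is to mirror the proofs of Theorem~\ref{2.2}, replacing integrals by sums and the weights $b-x$, $x-a$ by the discrete weights $n-k$ and $k$, which play exactly analogous roles since $(n-k)+k=n$. For part i.) I would use the analogue of the ``short method''. Writing $1=\frac{k+(n-k)}{n}$ for each $k$, I split
\[
a_k b_k \;=\; \frac{1}{n}\Bigl[\bigl(k^{1/p}a_k\bigr)\bigl(k^{1/q}b_k\bigr) + \bigl((n-k)^{1/p}a_k\bigr)\bigl((n-k)^{1/q}b_k\bigr)\Bigr],
\]
sum over $k=1,\dots,n$, and apply the classical H\"older inequality for sums (\ref{1-2}) separately to each of the two resulting sums, using the pairs of $n$-tuples $(k^{1/p}a_k), (k^{1/q}b_k)$ and $((n-k)^{1/p}a_k),((n-k)^{1/q}b_k)$. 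This produces exactly the right-hand side of (\ref{2-3}).

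For part ii.) I would adapt part b) of the proof of Theorem~\ref{2.2}. If $\sum_{k=1}^{n}a_k^p=0$ or $\sum_{k=1}^{n}b_k^q=0$, both sides of (\ref{2-4}) vanish, so assume $I=\bigl(\sum a_k^p\bigr)^{1/p}\bigl(\sum b_k^q\bigr)^{1/q}\neq 0$. Dividing the left-hand side of (\ref{2-4}) by $I$ puts it in the form
\[
\frac{1}{n}\left[\left(\frac{\sum k\,a_k^p}{\sum a_k^p}\right)^{\!1/p}\!\!\left(\frac{\sum k\,b_k^q}{\sum b_k^q}\right)^{\!1/q} + \left(\frac{\sum (n-k)\,a_k^p}{\sum a_k^p}\right)^{\!1/p}\!\!\left(\frac{\sum (n-k)\,b_k^q}{\sum b_k^q}\right)^{\!1/q}\right],
\]
and I apply scalar Young's inequality (\ref{0-1}) to each of the two products of ratios. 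The four resulting terms combine via $k+(n-k)=n$ to give $\frac{1}{n}\bigl(\frac{n}{p}+\frac{n}{q}\bigr)=\frac{1}{p}+\frac{1}{q}=1$, which is the required bound after multiplying back by $I$.

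There is no real conceptual obstacle, since the argument is a direct discretization of the integral proof. The only point requiring care is the book-keeping of the $\frac{1}{n}$ prefactor, which plays precisely the role of $\frac{1}{b-a}$ in Theorem~\ref{2.2}; once $k+(n-k)=n$ is used to collapse the weighted sums after Young's inequality, both parts follow immediately.
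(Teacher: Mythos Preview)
Your proposal is correct and follows exactly the same approach as the paper's own proof: the same splitting $1=\frac{k+(n-k)}{n}$ followed by H\"older for part~i.), and the same normalization by $S=\bigl(\sum a_k^p\bigr)^{1/p}\bigl(\sum b_k^q\bigr)^{1/q}$ followed by Young's inequality and the collapse via $k+(n-k)=n$ for part~ii.). There is nothing to add.
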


\begin{proof}
i.) By using of H\"{o}lder inequality in (\ref{1-2}), it is easily seen that%
\begin{eqnarray*}
&&\sum_{k=1}^{n}a_{k}b_{k} \\
&=&\sum_{k=1}^{n}\left( \frac{k}{n}+\frac{n-k}{n}\right) a_{k}b_{k} \\
&=&\frac{1}{n}\left\{
\sum_{k=1}^{n}k^{1/p}a_{k}k^{1/q}b_{k}+\sum_{k=1}^{n}\left( n-k\right)
^{1/p}a_{k}\left( n-k\right) ^{1/q}b_{k}\right\} \\
&\leq &\frac{1}{n}\left\{ \left( \sum_{k=1}^{n}ka_{k}^{p}\right)
^{1/p}\left( \sum_{k=1}^{n}kb_{k}^{q}\right) ^{1/q}+\left(
\sum_{k=1}^{n}\left( n-k\right) a_{k}^{p}\right) ^{1/p}\left(
\sum_{k=1}^{n}\left( n-k\right) b_{k}^{q}\right) ^{1/q}\right\} .
\end{eqnarray*}

ii.) First let us consider the case%
\begin{equation*}
\left( \sum_{k=1}^{n}a_{k}^{p}\right) ^{1/p}\left(
\sum_{k=1}^{n}b_{k}^{q}\right) ^{1/q}=0.
\end{equation*}

Then $a_{k}=0$ for $k=1,2,..,n$ or $b_{k}=0$ for $k=1,2,..,n.$ Thus, we have 
\begin{equation*}
\sum_{k=1}^{n}a_{k}b_{k}=0.
\end{equation*}%
Therefore the inequality (\ref{2-4}) is trivial in this case.

Finally, we consider the case%
\begin{equation*}
S=\left( \sum_{k=1}^{n}a_{k}^{p}\right) ^{1/p}\left(
\sum_{k=1}^{n}b_{k}^{q}\right) ^{1/q}\neq 0.
\end{equation*}%
Then%
\begin{eqnarray*}
&&\frac{1}{nS}\left\{ \left( \sum_{k=1}^{n}ka_{k}^{p}\right) ^{1/p}\left(
\sum_{k=1}^{n}kb_{k}^{q}\right) ^{1/q}+\left( \sum_{k=1}^{n}\left(
n-k\right) a_{k}^{p}\right) ^{1/p}\left( \sum_{k=1}^{n}\left( n-k\right)
b_{k}^{q}\right) ^{1/q}\right\} \\
&=&\frac{1}{n}\left\{ \left( \frac{\sum_{k=1}^{n}ka_{k}^{p}}{%
\sum_{k=1}^{n}a_{k}^{p}}\right) ^{1/p}\left( \frac{\sum_{k=1}^{n}kb_{k}^{q}}{%
\sum_{k=1}^{n}b_{k}^{q}}\right) ^{1/q}+\left( \frac{\sum_{k=1}^{n}\left(
n-k\right) a_{k}^{p}}{\sum_{k=1}^{n}a_{k}^{p}}\right) ^{1/p}\left( \frac{%
\sum_{k=1}^{n}\left( n-k\right) b_{k}^{q}}{\sum_{k=1}^{n}b_{k}^{q}}\right)
^{1/q}\right\} .
\end{eqnarray*}%
Applying (\ref{0-1}) on the right hand sides sums of the last inequality%
\begin{eqnarray*}
&&\frac{1}{nS}\left\{ \left( \sum_{k=1}^{n}ka_{k}^{p}\right) ^{1/p}\left(
\sum_{k=1}^{n}kb_{k}^{q}\right) ^{1/q}+\left( \sum_{k=1}^{n}\left(
n-k\right) a_{k}^{p}\right) ^{1/p}\left( \sum_{k=1}^{n}\left( n-k\right)
b_{k}^{q}\right) ^{1/q}\right\} \\
&\leq &\frac{1}{n}\left\{ \frac{\sum_{k=1}^{n}ka_{k}^{p}}{%
p\sum_{k=1}^{n}a_{k}^{p}}+\frac{\sum_{k=1}^{n}kb_{k}^{q}}{%
q\sum_{k=1}^{n}b_{k}^{q}}+\frac{\sum_{k=1}^{n}\left( n-k\right) a_{k}^{p}}{%
p\sum_{k=1}^{n}a_{k}^{p}}+\frac{\sum_{k=1}^{n}\left( n-k\right) b_{k}^{q}}{%
q\sum_{k=1}^{n}b_{k}^{q}}\right\} \\
&=&\frac{1}{p}+\frac{1}{q}=1.
\end{eqnarray*}%
This completes the proof.
\end{proof}

\begin{remark}
The inequality (\ref{2-4}) show that the inequality (\ref{2-3}) is better
than the inequality (\ref{1-2}).
\end{remark}

The more general versions of Theorem \ref{2.2} can be given as follow:

\begin{theorem}
\label{T-2.2} Let $a=\left( a_{1},...,a_{n}\right) $ and $b=\left(
b_{1},...,b_{n}\right) $ be two positive n-tuples and $p,q>0$ such that $%
1/p+1/q=1.$

i.) If $c=\left( c_{1},...,c_{n}\right) $ and $d=\left(
d_{1},...,d_{n}\right) $ be two positive n-tuples such that $%
c_{k}+d_{k}=1,k=1,2,...,n.$ Then 
\begin{eqnarray}
\sum_{k=1}^{n}a_{k}b_{k} &\leq &\left\{ \left(
\sum_{k=1}^{n}c_{k}a_{k}^{p}\right) ^{1/p}\left(
\sum_{k=1}^{n}c_{k}b_{k}^{q}\right) ^{1/q}\right.   \label{2-2-1} \\
&&\left. +\left( \sum_{k=1}^{n}d_{k}a_{k}^{p}\right) ^{1/p}\left(
\sum_{k=1}^{n}d_{k}b_{k}^{q}\right) ^{1/q}\right\} .  \notag
\end{eqnarray}%
ii.) If $c^{(i)}=\left( c_{1}^{(i)},...,c_{n}^{(i)}\right) ,i=1,2,...,m$ be
positive n-tuples such that $\sum_{i=1}^{m}c_{k}^{(i)}=1,k=1,2,...,n.$ Then 
\begin{eqnarray*}
\sum_{k=1}^{n}a_{k}b_{k} &\leq &\sum_{i=1}^{m}\left\{ \left(
\sum_{k=1}^{n}c_{k}^{(i)}a_{k}^{p}\right) ^{1/p}\left(
\sum_{k=1}^{n}c_{k}^{(i)}b_{k}^{q}\right) ^{1/q}\right\}  \\
&&
\end{eqnarray*}
\end{theorem}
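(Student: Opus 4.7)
The plan is to adapt the short method used in the proof of Theorem~\ref{2.3}(i) to the general partition-of-unity setting, in direct analogy with how Theorem~\ref{T-2.1} generalizes Theorem~\ref{2.2}. No integration-by-parts or Young-inequality argument is needed for the upper bound itself; a single clean application of the classical Hölder inequality~(\ref{1-2}) to each piece of a suitable decomposition will do the job.

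For part (i), I would use the hypothesis $c_k + d_k = 1$ to split each product as
\[
a_k b_k = (c_k + d_k)\, a_k b_k = \bigl(c_k^{1/p} a_k\bigr)\bigl(c_k^{1/q} b_k\bigr) + \bigl(d_k^{1/p} a_k\bigr)\bigl(d_k^{1/q} b_k\bigr),
\]
which is legitimate since $c_k, d_k \geq 0$, so the fractional powers are well defined. Summing over $k = 1, \ldots, n$ and then applying the classical Hölder inequality~(\ref{1-2}) separately to each of the two resulting sums (with the weights absorbed into $a_k$ and $b_k$ as indicated) yields the stated bound~(\ref{2-2-1}) immediately.

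For part (ii) the reasoning is identical but with $m$ summands: starting from $\sum_{i=1}^{m} c_k^{(i)} = 1$, I would write
\[
a_k b_k = \sum_{i=1}^{m} c_k^{(i)} a_k b_k = \sum_{i=1}^{m} \bigl(c_k^{(i)}\bigr)^{1/p} a_k \cdot \bigl(c_k^{(i)}\bigr)^{1/q} b_k,
\]
sum over $k$, interchange the order of the two (finite) summations, and apply~(\ref{1-2}) to each inner sum indexed by $i$.

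There is no substantive obstacle here: the whole argument is a one-line manipulation once the decomposition of $a_k b_k$ is set up, and the only hypothesis actually used is the nonnegativity of the $c$'s and $d$'s (respectively the $c^{(i)}$'s) together with the partition-of-unity relation. The positivity assumed in the statement is therefore more than enough. The companion second inequality of the style (\ref{2-4}), showing that this refined bound lies below the classical Hölder bound, would follow by the same Young-inequality normalization trick used in Theorem~\ref{2.3}(ii), but it is not part of the present statement and so need not be carried out.
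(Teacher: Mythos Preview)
Your proposal is correct and follows exactly the approach the paper intends: the paper's own proof is simply the one-line remark that the argument proceeds ``by using similar method the proof of Theorem~\ref{2.2}'', i.e., the short method of splitting via the partition of unity and applying H\"older to each piece, which is precisely what you have written out. Your additional observations about the hypotheses and the companion inequality are accurate but not needed for the statement as given.
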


\begin{proof}
The proof of Theorem is easily seen by using similar method the proof of
Theorem \ref{2.2}.
\end{proof}

\begin{remark}
It is easily seen that the inequalities obtained in Theorem \ref{T-2.2} are
the best than the inequality (\ref{1-2}).
\end{remark}

\begin{remark}
i.) In the inequality (\ref{2-2-1}) of Theorem \ref{T-2.2}, if we take $%
c=\left( \sin ^{2}1,...,\sin ^{2}n\right) $ and $d=\left( \cos
^{2}1,...,\cos ^{2}n\right) $, then we have 
\begin{eqnarray*}
\sum_{k=1}^{n}a_{k}b_{k} &\leq &\left\{ \left( \sum_{k=1}^{n}\sin
^{2}ka_{k}^{p}\right) ^{1/p}\left( \sum_{k=1}^{n}\sin ^{2}kb_{k}^{q}\right)
^{1/q}\right. \\
&&\left. +\left( \sum_{k=1}^{n}\cos ^{2}ka_{k}^{p}\right) ^{1/p}\left(
\sum_{k=1}^{n}\cos ^{2}kb_{k}^{q}\right) ^{1/q}\right\} .
\end{eqnarray*}

ii.) In the inequality (\ref{2-2-1}) of Theorem \ref{T-2.2}, if we take $%
c=\left( \frac{1}{n},\frac{2}{n}...,1\right) $ and $d=\left( \frac{n-1}{n},%
\frac{n-2}{n}...,0\right) $ , then we have the inequality (\ref{2-3}).
\end{remark}

\section{An Application}

In \cite{DA98}, Dragomir et al. gave the following lemma for obtain main
results.

\begin{lemma}
\label{L-3.1}Let $f:I^{\circ }\subseteq 
\mathbb{R}
\rightarrow 
\mathbb{R}
$ be a differentiable mapping on $I^{\circ }$, $a,b\in I^{\circ }$ with $a<b$
and $q>1.$ If $f\in L\left[ a,b\right] $, then the following equality holds:%
\begin{equation*}
\frac{f(a)+f(b)}{2}-\frac{1}{b-a}\int_{a}^{b}f(x)dx=\frac{b-a}{2}%
\int_{0}^{1}(1-2t)f(ta+(1-t)b)dt.
\end{equation*}
\end{lemma}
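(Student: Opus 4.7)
The plan is to verify the identity directly by integrating the right-hand side by parts in $t$. Before starting I note that the statement as typeset contains apparent typos: the integrand should read $f'(ta+(1-t)b)$ rather than $f(ta+(1-t)b)$ (one checks instantly with $f(x)=x$ that the formula as written fails), the regularity hypothesis should be $f'\in L[a,b]$, and the parameter $q>1$ plays no role in the identity itself but is presumably retained with an eye toward the Hölder-based corollaries that will follow. None of the refinements proved earlier in the paper are needed here; the lemma is a purely analytic representation formula.

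First I would set $J:=\int_0^1 (1-2t)\,f'(ta+(1-t)b)\,dt$ and integrate by parts with $u(t)=1-2t$ and $dv=f'(ta+(1-t)b)\,dt$. The chain rule $\tfrac{d}{dt}f(ta+(1-t)b)=(a-b)f'(ta+(1-t)b)$ identifies the antiderivative $v(t)=\tfrac{1}{a-b}f(ta+(1-t)b)$, while $du=-2\,dt$. Evaluating the boundary contribution $[uv]_0^1$ then gives $(-1)\cdot\tfrac{f(a)}{a-b}-(1)\cdot\tfrac{f(b)}{a-b}=\tfrac{f(a)+f(b)}{b-a}$.

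Next I would handle the residual integral $-\int_0^1 v\,du=\tfrac{2}{a-b}\int_0^1 f(ta+(1-t)b)\,dt$ via the affine change of variables $x=ta+(1-t)b$, under which $dx=(a-b)\,dt$ and the limits $t=0,1$ correspond to $x=b,a$ respectively. This gives $\int_0^1 f(ta+(1-t)b)\,dt=\tfrac{1}{b-a}\int_a^b f(x)\,dx$, so the residual contributes $-\tfrac{2}{(b-a)^2}\int_a^b f(x)\,dx$. Combining with the boundary term yields $J=\tfrac{f(a)+f(b)}{b-a}-\tfrac{2}{(b-a)^2}\int_a^b f(x)\,dx$, and multiplying by $(b-a)/2$ produces the asserted identity. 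There is no real obstacle: the whole argument is a single integration by parts followed by a linear substitution, and the only thing requiring care is the sign bookkeeping arising from the orientation of $x=ta+(1-t)b$, which runs from $b$ down to $a$ as $t$ increases from $0$ to $1$.
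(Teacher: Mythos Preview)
Your proof is correct, including your identification of the typos (the integrand should involve $f'$, the hypothesis should be $f'\in L[a,b]$, and $q>1$ is extraneous to the identity). Note, however, that the paper does not supply its own proof of this lemma: it is quoted as Lemma~1 of Dragomir and Agarwal \cite{DA98}, where the derivation is exactly the integration by parts you carry out, so there is nothing further to compare.
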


By using this equality and H\"{o}lder integral inequality Dragomir et al.
obtained the following inequality:

\begin{theorem}
\label{T-3.1}Let $f:I^{\circ }\subseteq 
\mathbb{R}
\rightarrow 
\mathbb{R}
$ be a differentiable mapping on $I^{\circ }$, $a,b\in I^{\circ }$ with $a<b$%
. If the new mapping $\left\vert f^{\prime }\right\vert ^{q}$\ is convex on $%
\left[ a,b\right] $, then the following inequality holds:%
\begin{equation}
\left\vert \frac{f(a)+f(b)}{2}-\frac{1}{b-a}\int_{a}^{b}f(x)dx\right\vert
\leq \frac{b-a}{2(p+1)^{1/p}}\left[ \frac{\left\vert f^{\prime
}(a)\right\vert ^{q}+\left\vert f^{\prime }(b)\right\vert ^{q}}{2}\right]
^{1/q},  \label{3-0}
\end{equation}%
where $1/p+1/q=1.$
\end{theorem}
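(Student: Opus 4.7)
The plan is to combine the integral identity of Lemma \ref{L-3.1} with the classical Hölder inequality (\ref{1-1}) applied to the pair of functions $|1-2t|$ and $|f'(ta+(1-t)b)|$ on $[0,1]$, and then use the convexity assumption on $|f'|^q$ to bound the resulting integral by an elementary expression.

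First I would take absolute values on both sides of the identity in Lemma \ref{L-3.1} and pass the absolute value inside the integral to obtain
\begin{equation*}
\left\vert \frac{f(a)+f(b)}{2}-\frac{1}{b-a}\int_{a}^{b}f(x)dx\right\vert \leq \frac{b-a}{2}\int_{0}^{1}\left\vert 1-2t\right\vert \left\vert f^{\prime }(ta+(1-t)b)\right\vert dt.
\end{equation*}
Then I would apply the Hölder inequality (\ref{1-1}) with conjugate exponents $p,q$ to the product $|1-2t|\cdot |f'(ta+(1-t)b)|$, splitting the weight $|1-2t|$ entirely into the first factor.

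Next I would evaluate the purely numerical integral
\begin{equation*}
\int_{0}^{1}\left\vert 1-2t\right\vert ^{p}dt=2\int_{0}^{1/2}(1-2t)^{p}dt=\frac{1}{p+1},
\end{equation*}
so that its $1/p$-th power is $1/(p+1)^{1/p}$. For the remaining factor, I would invoke convexity of $|f'|^q$ on $[a,b]$ to bound
\begin{equation*}
\left\vert f^{\prime }(ta+(1-t)b)\right\vert ^{q}\leq t\left\vert f^{\prime }(a)\right\vert ^{q}+(1-t)\left\vert f^{\prime }(b)\right\vert ^{q},
\end{equation*}
and integrating in $t$ over $[0,1]$ gives exactly $\tfrac{1}{2}(|f'(a)|^q+|f'(b)|^q)$. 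Multiplying the two resulting estimates together with the prefactor $(b-a)/2$ yields (\ref{3-0}).

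There is no real obstacle here: the only mild technical point is making sure the identity in Lemma \ref{L-3.1} is interpreted with $f'$ rather than $f$ in the integrand (a typographical slip in the statement as quoted), since otherwise the passage from $f$ to $|f'|^q$ on the right-hand side would not make sense. Everything else is a direct application of Hölder together with the definitional inequality for convexity of $|f'|^q$, so the proof is a short one-line computation once the lemma is in place.
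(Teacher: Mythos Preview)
Your proposal is correct and follows precisely the approach the paper attributes to Dragomir and Agarwal: the paper does not spell out a proof of Theorem~\ref{T-3.1} but merely states that it is obtained ``by using this equality and H\"{o}lder integral inequality,'' which is exactly what you do. Your observation about the typographical slip in Lemma~\ref{L-3.1} (the integrand should be $f'$ rather than $f$) is also accurate and necessary for the argument to make sense.
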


If Theorem \ref{T-3.1} are resulted again by using the inequality (\ref{2-1}%
) in Theorem \ref{2.2}, then we get the following result:

\begin{theorem}
Let $f:I^{\circ }\subseteq 
\mathbb{R}
\rightarrow 
\mathbb{R}
$ be a differentiable mapping on $I^{\circ }$, $a,b\in I^{\circ }$ with $a<b$%
. If the new mapping $\left\vert f^{\prime }\right\vert ^{q}$\ convex on $%
\left[ a,b\right] $, then the following inequality holds:%
\begin{eqnarray}
&&\left\vert \frac{f(a)+f(b)}{2}-\frac{1}{b-a}\int_{a}^{b}f(x)dx\right\vert
\label{3-1-1} \\
&\leq &\frac{b-a}{4(p+1)^{1/p}}\left\{ \left[ \frac{2\left\vert f^{\prime
}(a)\right\vert ^{q}+\left\vert f^{\prime }(b)\right\vert ^{q}}{3}\right]
^{1/q}+\left[ \frac{\left\vert f^{\prime }(a)\right\vert ^{q}+2\left\vert
f^{\prime }(b)\right\vert ^{q}}{3}\right] ^{1/q}\right\} ,  \notag
\end{eqnarray}%
where $1/p+1/q=1.$
\end{theorem}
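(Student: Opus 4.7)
The plan is to mimic the usual proof of Theorem \ref{T-3.1} but replace the classical Hölder inequality step with the refined inequality (\ref{2-1}) from Theorem \ref{2.2}. I start from Lemma \ref{L-3.1}, take absolute values, and obtain
\begin{equation*}
\left\vert \tfrac{f(a)+f(b)}{2}-\tfrac{1}{b-a}\int_{a}^{b}f(x)dx\right\vert \leq \tfrac{b-a}{2}\int_{0}^{1}\left\vert 1-2t\right\vert \,\left\vert f^{\prime }(ta+(1-t)b)\right\vert dt.
\end{equation*}
To the right-hand integral I will apply the refined inequality (\ref{2-1}) on the interval $[0,1]$, with the roles of $\left\vert f\right\vert $ and $\left\vert g\right\vert $ played by $\left\vert 1-2t\right\vert $ and $\left\vert f^{\prime }(ta+(1-t)b)\right\vert $, respectively. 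This produces two summands, each a product of an $L^{p}$-type factor involving $\left\vert 1-2t\right\vert ^{p}$ weighted by $t$ or $1-t$, and an $L^{q}$-type factor involving $\left\vert f^{\prime }(ta+(1-t)b)\right\vert ^{q}$ with the same weight.

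The $L^{q}$-factors are handled by convexity of $\left\vert f^{\prime }\right\vert ^{q}$: the bound $\left\vert f^{\prime }(ta+(1-t)b)\right\vert ^{q}\leq t\left\vert f^{\prime }(a)\right\vert ^{q}+(1-t)\left\vert f^{\prime }(b)\right\vert ^{q}$, combined with $\int_{0}^{1}t(1-t)dt=\tfrac{1}{6}$ and $\int_{0}^{1}t^{2}dt=\int_{0}^{1}(1-t)^{2}dt=\tfrac{1}{3}$, gives
\begin{equation*}
\int_{0}^{1}t\left\vert f^{\prime }(ta+(1-t)b)\right\vert ^{q}dt\leq \tfrac{2\left\vert f^{\prime }(a)\right\vert ^{q}+\left\vert f^{\prime }(b)\right\vert ^{q}}{6},
\end{equation*}
and the analogous bound with $1-t$ in front yields $\tfrac{\left\vert f^{\prime }(a)\right\vert ^{q}+2\left\vert f^{\prime }(b)\right\vert ^{q}}{6}$.

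For the $L^{p}$-factors I need $\int_{0}^{1}t\left\vert 1-2t\right\vert ^{p}dt$ and $\int_{0}^{1}(1-t)\left\vert 1-2t\right\vert ^{p}dt$. Splitting at $t=1/2$ and substituting $u=\left\vert 1-2t\right\vert $, a short computation gives that both integrals equal $\tfrac{1}{2(p+1)}$ (the symmetry of $\left\vert 1-2t\right\vert ^{p}$ about $1/2$ makes this equality unsurprising). The step where I expect to have to be most careful is bookkeeping the constants: raising $\tfrac{1}{2(p+1)}$ to the power $1/p$, the convexity quantities to the power $1/q$, and using $\tfrac{1}{p}+\tfrac{1}{q}=1$ to simplify $2^{1/p}\cdot 2^{1/q}=2$, so that the prefactor collapses to $\tfrac{1}{2(p+1)^{1/p}}$.

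Assembling the pieces, the integral on the right is at most
\begin{equation*}
\tfrac{1}{2(p+1)^{1/p}}\left\{ \left[ \tfrac{2\left\vert f^{\prime }(a)\right\vert ^{q}+\left\vert f^{\prime }(b)\right\vert ^{q}}{3}\right] ^{1/q}+\left[ \tfrac{\left\vert f^{\prime }(a)\right\vert ^{q}+2\left\vert f^{\prime }(b)\right\vert ^{q}}{3}\right] ^{1/q}\right\} ,
\end{equation*}
and multiplying by $\tfrac{b-a}{2}$ yields exactly (\ref{3-1-1}). No step is genuinely hard; the only real obstacle is the arithmetic of the weighted $|1-2t|^{p}$ integrals and tracking the exponents so that the $2^{1/p}2^{1/q}=2$ simplification appears cleanly.
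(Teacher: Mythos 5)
Your proposal is correct and follows essentially the same route as the paper: start from Lemma \ref{L-3.1}, apply the refined inequality (\ref{2-1}) on $[0,1]$ with weights $t$ and $1-t$, bound the $L^{q}$-factors by convexity of $\left\vert f^{\prime}\right\vert ^{q}$, and use $\int_{0}^{1}t\left\vert 1-2t\right\vert ^{p}dt=\int_{0}^{1}(1-t)\left\vert 1-2t\right\vert ^{p}dt=\frac{1}{2(p+1)}$ together with $2^{1/p}2^{1/q}=2$ to collapse the constants. The arithmetic checks out and reproduces (\ref{3-1-1}) exactly.
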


\begin{proof}
Using Lemma \ref{L-3.1} and the inequality (\ref{2-1}), we find%
\begin{eqnarray}
&&\left\vert \frac{f(a)+f(b)}{2}-\frac{1}{b-a}\int_{a}^{b}f(x)dx\right\vert
\label{3-1} \\
&\leq &\frac{b-a}{2}\int_{0}^{1}\left\vert 1-2t\right\vert \left\vert
f(ta+(1-t)b)\right\vert dt  \notag \\
&\leq &\frac{b-a}{2}\left\{ \left( \int_{0}^{1}(1-t)\left\vert
1-2t\right\vert ^{p}dt\right) ^{1/p}\left( \int_{0}^{1}(1-t)\left\vert
f(ta+(1-t)b)\right\vert ^{q}dt\right) ^{1/q}\right.  \notag \\
&&\left. +\left( \int_{0}^{1}t\left\vert 1-2t\right\vert ^{p}dt\right)
^{1/p}\left( \int_{0}^{1}t\left\vert f(ta+(1-t)b)\right\vert ^{q}dt\right)
^{1/q}\right\} .  \notag
\end{eqnarray}%
Using the convexity of $\left\vert f^{\prime }\right\vert ^{q}$, we have%
\begin{eqnarray}
\int_{0}^{1}t\left\vert f(ta+(1-t)b)\right\vert ^{q}dt &\leq &\int_{0}^{1}t 
\left[ t\left\vert f(a)\right\vert ^{q}+(1-t)\left\vert f(b)\right\vert ^{q}%
\right] dt  \notag \\
&=&\frac{2\left\vert f^{\prime }(a)\right\vert ^{q}+\left\vert f^{\prime
}(b)\right\vert ^{q}}{6},  \label{3-2}
\end{eqnarray}%
and%
\begin{eqnarray}
\int_{0}^{1}(1-t)\left\vert f(ta+(1-t)b)\right\vert ^{q}dt
&=&\int_{0}^{1}t\left\vert f(tb+(1-t)a)\right\vert ^{q}dt  \notag \\
&\leq &\frac{\left\vert f^{\prime }(a)\right\vert ^{q}+2\left\vert f^{\prime
}(b)\right\vert ^{q}}{6},  \label{3-3}
\end{eqnarray}%
Further, since%
\begin{eqnarray*}
\int_{0}^{1}t\left\vert 1-2t\right\vert ^{p}dt
&=&\int_{0}^{1}(1-t)\left\vert 1-2t\right\vert ^{p}dt \\
&=&\frac{1}{2(p+1)},
\end{eqnarray*}%
a combination of (\ref{3-1})-(\ref{3-3}) immediately gives the required
inequality (\ref{3-1-1}).
\end{proof}

\begin{remark}
Since $h:\left[ 0,\infty \right) \rightarrow 
\mathbb{R}
,h(x)=x^{s},0<s\leq 1,$ is a concave function, for \ all $u,v\geq 0$ we have%
\begin{equation*}
h\left( \frac{u+v}{2}\right) =\left( \frac{u+v}{2}\right) ^{s}\geq \frac{%
h(u)+h(v)}{2}=\frac{u^{s}+v^{s}}{2}.
\end{equation*}%
From here, we get%
\begin{equation*}
\frac{1}{2}\left[ \frac{2\left\vert f^{\prime }(a)\right\vert
^{q}+\left\vert f^{\prime }(b)\right\vert ^{q}}{3}\right] ^{1/q}+\frac{1}{2}%
\left[ \frac{\left\vert f^{\prime }(a)\right\vert ^{q}+2\left\vert f^{\prime
}(b)\right\vert ^{q}}{3}\right] ^{1/q}\leq \left[ \frac{\left\vert f^{\prime
}(a)\right\vert ^{q}+\left\vert f^{\prime }(b)\right\vert ^{q}}{2}\right]
^{1/q}
\end{equation*}%
Thus we obtain%
\begin{eqnarray*}
&&\frac{b-a}{4(p+1)^{1/p}}\left\{ \left[ \frac{2\left\vert f^{\prime
}(a)\right\vert ^{q}+\left\vert f^{\prime }(b)\right\vert ^{q}}{3}\right]
^{1/q}+\left[ \frac{\left\vert f^{\prime }(a)\right\vert ^{q}+2\left\vert
f^{\prime }(b)\right\vert ^{q}}{3}\right] ^{1/q}\right\} \\
&\leq &\frac{b-a}{2(p+1)^{1/p}}\left[ \frac{\left\vert f^{\prime
}(a)\right\vert ^{q}+\left\vert f^{\prime }(b)\right\vert ^{q}}{2}\right]
^{1/q}.
\end{eqnarray*}%
This show us that the inequality (\ref{3-1-1}) is the best than the
inequality (\ref{3-0}).
\end{remark}

\end{document}